\newcommand{\R}{{\mathbb R}}
\numberwithin{equation}{section}
\newtheorem{theorem}{Theorem}[section]
\newtheorem{proposition}[theorem]{Proposition}
\newtheorem{conject}[theorem]{Conjecture}
\newtheorem{problem}[theorem]{Problem}
\theoremstyle{definition}
\newcommand{\lapl}{\Delta}
\newcommand{\iu}{{\rm i}}
\title[Bifurcation for quasi-linear Schr\"odinger equations]{On 
a bifurcation value related \\ to quasi-linear Schr\"odinger equations}
\author{Marco Caliari}
\address{Dipartimento di Informatica
\newline\indent
Universit\`a degli Studi di Verona
\newline\indent
C\'a Vignal 2, Strada Le Grazie 15
\newline\indent
I-37134 Verona, Italy}
\email{marco.caliari@univr.it}
\email{marco.squassina@univr.it}
\author{Marco Squassina}
\thanks{Research supported by 2009 PRIN: {\em Metodi Variazionali e Topologici
nello Studio di Fenomeni non Lineari}}
\begin{document}
	

\subjclass[2010]{35A15; 35B06; 74G65; 35B65}

\keywords{Bifurcation phenomena, minimization problems, quasi-linear Schr\"odinger equation.}

\begin{abstract}
By virtue of numerical arguments we study a bifurcation phenomenon occurring for a class of
minimization problems associated with the quasi-linear Schr\"odinger equation.
\end{abstract}

\maketitle

\section{Introduction}
Various physical situations are described by quasi-linear equations of the form
\begin{equation}
   \label{eq.schr1}
 \begin{cases}
 \iu\phi_t+\lapl\phi+\phi\lapl |\phi|^2+|\phi|^{p-1}\phi=0 & \text{in $(0,\infty)\times \R^3$},\\
 \phi(0,x)=\phi_0(x) & \text{in $\R^3$},
 \end{cases}
\end{equation}
where $1<p<11$, $\iu$ stands for the imaginary unit and the unknown $\phi:(0,\infty)\times \R^3\to {\mathbb C}$ is a complex valued
function. For example, it is used in plasma physics and
fluid mechanics, in the theory of Heisenberg ferromagnets and magnons and in condensed
matter theory. See e.g.\ the bibliography of~ \cite{CJS}. Motivated by the classical stability results
of the semi-linear Schr\"odinger equation
\begin{equation}
   \label{eq.schrr}
 \begin{cases}
 \iu\phi_t+\lapl\phi+|\phi|^{p-1}\phi=0 & \text{in $(0,\infty)\times \R^3$},\\
 \phi(0,x)=\phi_0(x) & \text{in $\R^3$},
 \end{cases}
\end{equation}
namely the stability of ground states (least energy solutions of $-\Delta u+\omega u =|u|^{p-1}u$, $\omega>0$) for
$1 < p < \frac{7}{3}$ and their instability for $p \geq \frac{7}{3}$  (see \cite{BeCa,CaLi}),
an interesting and physically relevant question for equation \eqref{eq.schr1} is the orbital stability
of ground state solutions of 
\begin{equation}\label{gs1}
   -\Delta u-u\Delta u^2+\omega u =|u|^{p-1}u\quad  \text{in $\R^3$}.
\end{equation}
When $1<p <\frac{13}{3}$, it is conjectured in \cite{CJS} that the ground states
are orbitally stable. However, in \cite{CJS} this result was not proved. 
Instead, it was considered the stability issue for
the minimizers of the problem
\begin{equation}
   \label{defvalc}
\mathscr{M}(c)=\inf_{\substack{u\in X \\ \|u\|_{L^2(\R^3)}^2 = c}} {\mathscr E}(u),
\end{equation}
where $1<p<\frac{13}{3}$ and the energy functional ${\mathscr E}$ defined on 
$X=\big\{u\in H^1(\R^3):\,  u|Du| \in L^2(\R^3)\big\}$ by
\begin{equation}
\label{eq:energyfunc}
{\mathscr E}(u) = \frac{1}{2}\int_{\R^3} (1+2u^2)|Du|^2dx- \frac{1}{p+1}\int_{\R^3} |u|^{p+1}dx.
\end{equation}
This problem, which looks interesting by itself, can be seen as a useful tool for a first
attempt towards the understanding of orbital stability of ground states of~\eqref{gs1} for
fixed $\omega >0$. Denoting by $\mathcal{G}(c)$ the set of
solutions to \eqref{defvalc}, in \cite{CJS} the authors prove that
if $1<p<\frac{13}{3}$ and $c>0$ is such that $m(c)<0$, then ${\mathcal G}(c)$ is orbitally stable (see \cite{CJS}
for the definition). Concerning \eqref{defvalc}, we learn \cite{CJS} that the following facts hold:

\begin{proposition}[CJS, \cite{CJS}]
   \label{raccolta} 
The following properties hold.
   \begin{enumerate}
       \item Assume that $1< p < \frac{13}{3}$. Then $\mathscr{M}(c) > - \infty$ for every $c>0$. 
           \vskip4pt
           \item Assume that $p=\frac{13}{3}$. Then $\mathscr{M}(c) > - \infty$ for $c>0$ small and $\mathscr{M}(c)=- \infty$ for $c>0$ large.
       \vskip4pt
       \item Assume that $\frac{13}{3} < p <11$. Then $\mathscr{M}(c)=-\infty $ for every $c>0$.
      \vskip4pt
       \item Assume that $1< p <\frac{13}{3}$ and let $c >0$ be such that
$\mathscr{M}(c)<0$. Then problem~\eqref{defvalc} admits a positive minimizer $u_c\in X$ which is
radially symmetric and radially decreasing. Moreover any solution $u_c \in X$ of~\eqref{defvalc} satisfies
\begin{equation}
	\label{eq:stationary}
- \Delta u_c- u_c\Delta u_c^2 +\lambda_c u_c=|u_c|^{p-1}u_c
\end{equation}
for some $\lambda_c>0$.
\vskip4pt
\item Assume that $1< p <\frac{7}{3}$. Then $\mathscr{M}(c) <0$ for every $c>0$.
   \vskip4pt
   \item Assume that $\frac{7}{3} \leq  p \leq \frac{13}{3}$. Then $\mathscr{M}(c) \leq 0$ for every $c>0$. 
\vskip4pt
\item
Assume that $\frac{7}{3} \leq p <\frac{13}{3}$. Then there exists 
$$
c_\sharp=c(p)>0
$$
such that:
\begin{enumerate}
\item If $c < c_\sharp$ then  $\mathscr{M}(c) =0$  and  $\mathscr{M}(c)$ {\bf does not admit} a minimizer.
\vskip4pt
\item If $c > c_\sharp$ then $\mathscr{M}(c)<0$  and $\mathscr{M}(c)$ {\bf admits} a minimizer. 
\end{enumerate}
\end{enumerate}
\end{proposition}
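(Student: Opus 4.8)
The plan is to reduce everything to a single one-parameter spatial dilation together with Gagliardo--Nirenberg estimates. Write $A=\int_{\R^3}|Du|^2\,dx$, $B=\int_{\R^3}u^2|Du|^2\,dx$, $C=\int_{\R^3}|u|^{p+1}\,dx$ and $c=\int_{\R^3}u^2\,dx$, so that $\mathscr{E}(u)=\tfrac12A+B-\tfrac{1}{p+1}C$. For $\sigma>0$ set $w_\sigma(x)=u(x/\sigma)$; a change of variables gives $\|w_\sigma\|_{L^2(\R^3)}^2=\sigma^3 c$ and
\[
\mathscr{E}(w_\sigma)=\sigma\Big(\tfrac12A+B\Big)-\frac{\sigma^3}{p+1}\,C .
\]
This identity does most of the work. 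First, taking any fixed $u\neq0$ and letting $\sigma\to+\infty$ shows $\mathscr{E}(w_\sigma)\to-\infty$, so $\mathscr{M}(c)<0$ for all sufficiently large $c$. Second, if $\mathscr{M}(c_0)<0$ there is $u$ with mass $c_0$ and $\tfrac12A+B<\tfrac{1}{p+1}C$; since $\sigma^3\ge\sigma$ for $\sigma\ge1$, the identity yields $\mathscr{E}(w_\sigma)\le\sigma\,\mathscr{E}(u)<0$, and as $\sigma$ ranges over $[1,\infty)$ the mass $\sigma^3c_0$ sweeps $[c_0,\infty)$. Hence the set $\{c>0:\mathscr{M}(c)<0\}$ is nonempty and upward closed.

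The only genuinely analytic point --- and the step I expect to be the main obstacle --- is to show $\mathscr{M}(c)=0$ for small $c$, i.e.\ $\mathscr{E}(u)\ge0$ whenever $c=\|u\|_{L^2}^2$ is small. A single Gagliardo--Nirenberg inequality does not suffice uniformly on $[\tfrac73,\tfrac{13}{3})$: the estimate
\[
C\le K_1\,A^{\frac{3(p-1)}{4}}\,c^{\frac{5-p}{4}}
\]
has an $A$-exponent $\ge1$ that is superlinear for $p>\tfrac73$, so it cannot be absorbed into $\tfrac12A+B$ for large $A$. The remedy is to bring in the quasi-linear term: applying Gagliardo--Nirenberg to $v=u^2$, and using $\int u^2|Du|^2=\tfrac14\int|Dv|^2$ and $\|v\|_{L^1}=c$, I would obtain
\[
C\le K_2\,B^{\frac{3(p-1)}{10}}\,c^{\frac{11-p}{10}},
\]
whose $B$-exponent is $\le1$ throughout the range. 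Taking the geometric mean of the two bounds with weight $\theta=\frac{1-\delta}{\gamma-\delta}\in(0,1]$, where $\gamma=\tfrac{3(p-1)}{4}$ and $\delta=\tfrac{3(p-1)}{10}$, makes the combined $A$- and $B$-exponents sum to exactly $1$ while leaving a strictly positive power $\kappa$ of $c$. Weighted arithmetic--geometric mean then gives
\[
\frac{1}{p+1}\,C\le\frac{K_\ast}{p+1}\,c^{\kappa}\Big(\tfrac12A+B\Big),
\]
so that $\mathscr{E}(u)\ge0$ once $c$ is small enough; with the bound $\mathscr{M}(c)\le0$ from item (6) this forces $\mathscr{M}(c)=0$. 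The exponent bookkeeping needed to check $\kappa>0$ on the whole interval (and the endpoint $p=\tfrac73$, where $\theta=1$ and only the first inequality is used) is where the care lies.

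With these facts in hand I would set
\[
c_\sharp=\inf\{c>0:\mathscr{M}(c)<0\},
\]
which by the two previous paragraphs satisfies $0<c_\sharp<\infty$. If $c>c_\sharp$ then, by upward closedness, $\mathscr{M}(c)<0$, and item (4) supplies a minimizer; this gives part (b). If $c<c_\sharp$ then $c\notin\{c':\mathscr{M}(c')<0\}$, so $\mathscr{M}(c)\ge0$, and again item (6) yields $\mathscr{M}(c)=0$. It remains to exclude a minimizer in this regime. Suppose $u$ realized $\mathscr{M}(c)=0$ with $c<c_\sharp$; then $u\neq0$, so $\tfrac12A+B>0$, and the relation $\mathscr{E}(u)=0$ reads $\tfrac{1}{p+1}C=\tfrac12A+B$. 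Substituting into the dilation identity gives
\[
\mathscr{E}(w_\sigma)=\Big(\tfrac12A+B\Big)\,\sigma\big(1-\sigma^2\big),
\]
which is strictly negative for every $\sigma>1$, at mass $\sigma^3c>c$. Hence $\mathscr{M}(c')<0$ for all $c'>c$, forcing $c_\sharp\le c$ and contradicting $c<c_\sharp$. Therefore no minimizer exists for $c<c_\sharp$, completing part (a).
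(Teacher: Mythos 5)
First, a point of status: the paper contains no proof of this statement. Proposition~\ref{raccolta} is imported verbatim from \cite{CJS}, and the paper itself remarks that $c_\sharp(p)$ is obtained there only through an indirect argument by contradiction. So your proposal can only be judged on its own merits and against the related machinery the paper does develop (Proposition~\ref{variousnew}, Section~\ref{numsec}). On those terms, what you actually prove is correct. The dilation identity for $w_\sigma(x)=u(x/\sigma)$, the upward closedness of $\{c>0:\mathscr{M}(c)<0\}$, and the two-inequality interpolation all check out: your second Gagliardo--Nirenberg bound (applied to $v=u^2$) is exactly the paper's inequality~\eqref{1.1}; the weight works out to $\theta=\frac{2(13-3p)}{9(p-1)}$, which lies in $(0,1]$ precisely for $\frac{7}{3}\le p<\frac{13}{3}$; the combined exponent on $A$ and $B$ equals $1$; and $\kappa=\frac{5-p}{4}\theta+\frac{11-p}{10}(1-\theta)>0$ because $p<5$. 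This yields $\mathscr{E}(u)\ge\bigl(\tfrac12 A+B\bigr)\bigl(1-\tfrac{K_\ast}{p+1}c^{\kappa}\bigr)\ge0$ for small $c$ --- strictly more than the paper's Proposition~\ref{variousnew}(2) gives, which is only a finite negative lower bound --- and your exclusion of minimizers at $\mathscr{M}(c)=0$ via $\mathscr{E}(w_\sigma)=(\tfrac12A+B)\sigma(1-\sigma^2)$ is clean. Indeed your route is more quantitative than the contradiction argument attributed to \cite{CJS}, since it produces an explicit smallness threshold for $c$.

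The genuine gap is that this is a proof of item (7) only, and even there part (b) is obtained by citing item (4) of the very proposition under proof, which you never establish. Item (4) is the analytic heart of the statement: existence of a minimizer when $\mathscr{M}(c)<0$ cannot be reached by dilations and Gagliardo--Nirenberg estimates; it requires compactness of minimizing sequences via the concentration-compactness principle \cite{pll-1,pll-2}, i.e.\ ruling out dichotomy (the hard strict-subadditivity step, \cite[Theorem 1.11]{CJS}) and vanishing (excluded because $\mathscr{M}(c)<0$), together with symmetrization for the radial claims and a separate argument for $\lambda_c>0$. Items (1), (2), (3), (5), (6) are likewise left unproven. Of these, (6) is actually needed inside your own argument and cannot be derived from your $w_\sigma$, which changes the mass; it follows instead from the mass-preserving rescaling $u_\sigma(x)=\sigma^{3/2}u(\sigma x)$, for which $\mathscr{E}(u_\sigma)=\frac{\sigma^2}{2}A+\sigma^5B-\frac{1}{p+1}\sigma^{3(p-1)/2}C\to0$ as $\sigma\to0$ when $p\ge\frac{7}{3}$. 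The same rescaling disposes of (3) and (5) (letting $\sigma\to\infty$, resp.\ $\sigma\to0$), item (1) follows from your second inequality alone since its $B$-exponent is $<1$, and (2) additionally needs a test function (e.g.\ the Gaussians of Section~\ref{numsec}) making $B-\frac{C}{p+1}<0$ for large $c$. In short: a correct, even sharper-than-cited, treatment of the bifurcation dichotomy, but conditional on the existence theory, which is the missing and hardest half of the proposition.
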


The motivation for the formulation of the following problems is mainly related to point (7) in the statement of Proposition \ref{raccolta}.
Notice that the value of $c_\sharp (p)$ can be characterized as follows
$$
c_\sharp(p)=\inf\{c>0:\, \mathscr{M}(c)<0\},
$$ 
This could help while trying to numerically compute the value of $c_\sharp(p)$.
The bifurcation value $c_\sharp(p)$ which appears in the previous Proposition \ref{raccolta} is obtained in \cite{CJS} by
an indirect argument by contradiction and thus it is not explicitly available for calculation
through a given formula. Hence, on these basis, it seems natural to formulate the following problems:

\begin{problem}
	\label{bounds}
Provide some lower and upper bounds of $\mathscr{M}(c)$ for $c>0$ and $p\in [\frac{7}{3},\frac{13}{3})$.
\end{problem}

\begin{problem}
	\label{bifproblem}
Numerically compute or provide bounds for the map $c_\sharp:[\frac{7}{3},\frac{13}{3})\to (0,+\infty)$.
\end{problem}

\begin{problem}
	\label{gsproblem}
Numerically compute the solutions to $\mathscr{M}(c)$ for $c>c_\sharp(p)$ with $p\in [\frac{7}{3},\frac{13}{3})$.
\end{problem}

\noindent
For the corresponding, more classical \cite{CaLi}, semi-linear minimization problem
	\begin{equation*}
	\mathscr{M}_{{\rm sl}}(c)=\inf_{\substack{u\in H^1(\R^3) \\ \|u\|_{L^2(\R^3)}^2 = c}} {\mathscr E}_{{\rm sl}}(u),\qquad
	{\mathscr E}_{{\rm sl}}(u) = \frac{1}{2}\int_{\R^3}|Du|^2dx- \frac{1}{p+1}\int_{\R^3} |u|^{p+1}dx,\quad 1<p<\frac{7}{3},
	\end{equation*}
there is no bifurcation phenomena, namely $c_\sharp(p)=0$ for every $1<p<\frac{7}{3}$
and Problem \ref{gsproblem} was studied in \cite{CS} by arguing on a suitable associated parabolic problem 
in order to decrease initial energies computed on Gaussian initial guesses.  
	An important point both for analytical and numerical purposes is the fact that
	minimizers of $\mathscr{M}_{{\rm sl}}(c)$ have fixed sign, are radially symmetric, decreasing and 
	unique, up to translations and multiplications by $\pm 1$. In principle,
	the uniqueness is used in \cite{CS} to justify that the numerical algorithm really provides
	the solution to the minimization problem $\mathscr{M}_{{\rm sl}}(c)$.
	To show the uniqueness of solutions to $\mathscr{M}_{{\rm sl}}(c)$ one can argue as follows. By the result of~\cite{kwong}, 
	for any $\lambda>0$ there exists a unique (up to translations)
	positive and radially symmetric solution $r=r_\lambda:\R^3\to\R$ of
	\begin{equation*}
	-\Delta r+\lambda r=r^{p}\qquad\text{in $\R^3$},
	\end{equation*}
	In turn,
	given $\lambda_1,\lambda_2>0$, if $r_1,r_2:\R^3\to\R$ denote, respectively, positive radial
	solutions of 
	\begin{equation}
		\label{dueeq}
	-\Delta r_1+\lambda_1 r_1=r_1^{p}\qquad\text{in $\R^3$},\qquad\,\,\,
	-\Delta r_2+\lambda_2 r_2=r_2^{p}\qquad\text{in $\R^3$},
	\end{equation}
	there exists some point $\xi\in\R^3$ such that
	\begin{equation}
		\label{realvscomp}
	r_2(x+\xi)=\mu r_1(\gamma x),\qquad \gamma:=\Big(\frac{\lambda_2}{\lambda_1}\Big)^\frac{1}{2},\quad
	\mu:=\Big(\frac{\lambda_2}{\lambda_1}\Big)^\frac{1}{p-1}.
	\end{equation}
	Let now $r_1$ and $r_2$ be two given solutions 
	to the minimization problem $\mathscr{M}_{{\rm sl}}(c)$ and let $-\lambda_1$ and $-\lambda_2$ be the
	corresponding Lagrange multipliers. By virtue of \cite[Theorem II.1, ii)]{CaLi}, $\lambda_1,\lambda_2>0$
	and $r_1,r_2$ are $C^2$ solutions of \eqref{dueeq}, are radially symmetric, radially decreasing and with fixed sign. In turn, 
	up to multiplication by $\pm 1$, $r_1,r_2>0$ and by \eqref{realvscomp} it holds
	$$
	c=\|r_2\|_{L^2(\R^3)}^2=\|r_2(\cdot+\xi)\|_{L^2(\R^3)}^2=\mu^2\gamma^{-3}\int_{\R^3} r_1^2(x)dx=c\mu^2\gamma^{-3},
	$$
	yielding in turn $\mu^2\gamma^{-3}=1$. By the definition of $\gamma$ 
	and $\mu$ in~\eqref{realvscomp}, we get $\lambda_1=\lambda_2$ and $\gamma=\mu=1$,
	yielding from~\eqref{realvscomp} as desired $r_1=r_2$, up to a translation.
	
	On the contrary, it is not currently known that
	minimizers $r\in X$ of the quasi-linear minimization problem~\eqref{defvalc} 
	are unique, up to translations and multiplications by $\pm 1$, 
	although it is conjectured that this is the case. If 
	$r\in X$ is a given minimizer for~\eqref{defvalc}, arguing as in \cite{CJS} it is possible to prove that
	it has fixed sign, so that, up to multiplication by $-1$, we may assume $r>0$. Then, $r$ is radially
	symmetric and radially decreasing, see for instance the main result of \cite{squ-acv}.
	Given $r_1$ and $r_2$ two solutions 
	to the minimization problem $\mathscr{M}(c)$ we have that $-\lambda_1$ and $-\lambda_2$ are the
	corresponding Lagrange multipliers and $\lambda_1,\lambda_2>0$ in light of \cite[Lemma 4.6]{CJS}
	\begin{equation}
		\label{changescal}
	- \Delta r_i- r_i\Delta r_i^2 +\lambda_i r_i=r_i^{p}\qquad\text{in $\R^3$},\qquad i=1,2.
	\end{equation}
	With respect to the semi-linear case, the main problem is the identification of Lagrange multipliers, which cannot be inferred as in the
	semi-linear case. In fact, let us consider as above the rescaling for $r_1$ by 
	$$
	w(x)=\mu r_1(\gamma x),\qquad \gamma:=\Big(\frac{\lambda_2}{\lambda_1}\Big)^\frac{1}{2},\quad
	\mu:=\Big(\frac{\lambda_2}{\lambda_1}\Big)^\frac{1}{p-1}.
	$$
	Then this yields $\|w\|_{L^2(\R^3)}^2=(\lambda_2/\lambda_1)^{\frac{4-3(p-1)}{2(p-1)}}c$ and
	\begin{equation*}
	- \Delta w- (\lambda_1/\lambda_2)^\frac{2}{p-1}w\Delta w^2 +\lambda_2 w=w^{p}\qquad\text{in $\R^3$}.
	\end{equation*}
	Although there are recent uniqueness results for the positive radial solutions to \eqref{changescal}, due to the presence
	of the residual coefficient $(\lambda_1/\lambda_2)^{2/(p-1)}$, we cannot infer as before that $w(\cdot)=r_2(\cdot+\xi)$
	for some point $\xi\in\R^3$. In the course of the next section, we shall compute the ground state  
under a conjectured property (indeed true in the semi-linear case discussed above) stated in the following
\begin{conject}
Assume that $r_1,r_2>0$ are radial decreasing solutions to \eqref{changescal} with $\lambda_1,\lambda_2>0$, 
${\mathcal E}(r_i)<0$  and $\|r_i\|^2_{L^2(\R^3)}=c$ for $i=1,2$. 
Then $\lambda_1=\lambda_2$ and $r_2=r_1(\cdot+\xi)$, for some $\xi\in\R^3$.
\end{conject}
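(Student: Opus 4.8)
The plan is to concentrate the whole difficulty in the identification $\lambda_1=\lambda_2$: once this is known, the two equations in \eqref{changescal} coincide, $r_1$ and $r_2$ are positive radial decreasing solutions of the \emph{same} autonomous equation, and the available uniqueness results for positive radial solutions at a fixed Lagrange multiplier force $r_2=r_1(\cdot+\xi)$ for some $\xi\in\R^3$. I would therefore regard $r_\lambda$ as the (unique) positive radial decreasing solution of $-\Delta r-r\Delta r^2+\lambda r=r^{p}$ and reduce the conjecture to the injectivity of the mass map $\lambda\mapsto M(\lambda):=\|r_\lambda\|_{L^2(\R^3)}^2$ on the set of $\lambda$ for which $\mathcal{E}(r_\lambda)<0$.

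The first step is to record the two scaling identities. Testing \eqref{changescal} with $r_i$ gives a Nehari identity, while evaluating the action on the dilations $r_i(\cdot/t)$ gives the Pohozaev identity; writing $A_i=\int_{\R^3}|Dr_i|^2$, $B_i=\int_{\R^3}r_i^2|Dr_i|^2$ and $G_i=A_i+2B_i=\int_{\R^3}(1+2r_i^2)|Dr_i|^2$, they read
\[
A_i+4B_i+\lambda_i c=\int_{\R^3}|r_i|^{p+1},\qquad A_i+2B_i+3\lambda_i c=\frac{6}{p+1}\int_{\R^3}|r_i|^{p+1}.
\]
Eliminating $\int_{\R^3}|r_i|^{p+1}$ yields the clean relations
\[
\mathcal{E}(r_i)=\tfrac13 G_i-\tfrac{\lambda_i}{2}\,c,\qquad (11-p)\,G_i-6A_i=3(p-1)\lambda_i c,
\]
so that the sign condition $\mathcal{E}(r_i)<0$ is equivalent to localizing the solution on the branch $G_i<\tfrac32\lambda_i c$. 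These two identities among the three quantities $A_i,B_i,\lambda_i$ (with $c$ fixed) are, however, not enough to pin down $\lambda_i$: the missing information is genuinely of ODE/spectral nature.

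To extract it I would pass to the dual semilinear formulation through the change of unknown $u=f(v)$ with $f(0)=0$ and $f'(v)=(1+2f(v)^2)^{-1/2}$, which turns $\int_{\R^3}(1+2u^2)|Du|^2$ into $\int_{\R^3}|Dv|^2$ and transforms \eqref{changescal} into the semilinear equation $-\Delta v+\lambda_i f(v)f'(v)=|f(v)|^{p-1}f(v)f'(v)$, whose nonlinearity behaves like $-\lambda_i v$ near the origin and like $|v|^{(p-1)/2}$ at infinity, the latter being subcritical precisely because $p<11$. On this problem one can hope to run a Kwong--McLeod shooting analysis to establish nondegeneracy of the linearization, and then to compute the slope of the mass through the envelope identity $\frac{d}{d\lambda}\mathcal{E}(r_\lambda)=-\frac{\lambda}{2}M'(\lambda)$, the goal being to show $M'(\lambda)\neq0$ (a Vakhitov--Kolokolov type condition, consistent with the orbital stability proved in \cite{CJS}) along the negative-energy branch; strict monotonicity of $M$ would then give $\lambda_1=\lambda_2$.

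The hard part is exactly this last step, and the obstruction is the one already noted after \eqref{changescal}. The amplitude--space rescaling $r\mapsto a\,r(a^{(p-1)/2}\,\cdot)$ multiplies the coefficient $1$ of the quasi-linear term by $a^{-2}$, so it cannot move a solution with that coefficient to another value of $\lambda$ while keeping the coefficient equal to $1$; the residual factor $(\lambda_1/\lambda_2)^{2/(p-1)}$ is precisely this defect. Consequently, unlike the semi-linear case, where Kwong's theorem combined with pure scaling closes the argument at once, here one must genuinely control the spectrum of the linearized quasi-linear operator (equivalently, of its semilinear dual), and this is the analytic crux that keeps the statement at the level of a conjecture.
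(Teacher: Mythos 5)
You are attempting a statement that the paper itself labels a \textbf{Conjecture} and never proves: the authors only explain why the Cazenave--Lions/Kwong scaling argument from the semilinear case breaks down (the rescaling $w=\mu r_1(\gamma\,\cdot)$ leaves the residual coefficient $(\lambda_1/\lambda_2)^{2/(p-1)}$ in front of the quasi-linear term), and then they \emph{assume} the statement in order to run their numerics. Judged as a proof, your proposal has a genuine gap --- and to your credit, you name it yourself. The preliminary reductions are sound: the Nehari and Pohozaev identities check out (testing \eqref{changescal} with $r_i$ gives $A_i+4B_i+\lambda_i c=\int_{\R^3}|r_i|^{p+1}$, dilation gives $A_i+2B_i+3\lambda_i c=\frac{6}{p+1}\int_{\R^3}|r_i|^{p+1}$, whence indeed $\mathcal{E}(r_i)=\frac13 G_i-\frac{\lambda_i}{2}c$); the dual change of unknown $f'=(1+2f^2)^{-1/2}$ is the standard one associated with this equation; and the envelope identity $\frac{d}{d\lambda}\mathcal{E}(r_\lambda)=-\frac{\lambda}{2}M'(\lambda)$ is a correct consequence of criticality. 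But the argument then funnels everything into two claims that are never established: (i) that for each fixed $\lambda$ there is a unique positive radial decreasing solution $r_\lambda$, forming a branch regular enough for $M(\lambda)=\|r_\lambda\|_{L^2(\R^3)}^2$ to be well defined and differentiable (the paper cites recent fixed-$\lambda$ uniqueness results, cf.\ \cite{GlaSqu}, but these come with restrictions and do not by themselves produce such a branch); and (ii) the strict monotonicity $M'(\lambda)\neq 0$ on the negative-energy branch, which you correctly identify as a Vakhitov--Kolokolov type nondegeneracy condition and then leave open.

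Claim (ii) is not a technical remainder to be filled in later; it \emph{is} the conjecture. Injectivity of the mass map $\lambda\mapsto M(\lambda)$ is precisely the identification of Lagrange multipliers that the paper points out cannot be obtained by scaling in the quasi-linear setting, so your proposal in effect reduces the conjecture to an equivalent open statement rather than proving it. Your diagnosis of the obstruction coincides exactly with the paper's, and the reduction to mass-map monotonicity is a reasonable programme of attack, but as a proof the proposal is incomplete at its central step.
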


\noindent
Consequently, given $c>0$, assume that there exist 
$r:\R^3\to\R$ and $\lambda>0$ such that
$$
\text{$r=\rho(|x|)>0$,\,\, $\rho'\leq 0$},\qquad
\|r\|^2_{L^2(\R^3)}=c,\qquad
{\mathcal E}(r)<0,
\qquad
- \Delta r- r\Delta r^2 +\lambda r=r^{p}\quad\text{in $\R^3$}.
$$
We know that this happens to be the case under the assumption that $\mathscr{M}(c)<0$.
Then $r$ is the unique solution to problem $\mathscr{M}(c)$, up to translations and multiplication by $\pm 1$.
\vskip2pt
\noindent
In figure~\ref{fig:confronto}, we compared the shape of the solutions to
\begin{equation}
	\label{alphamin}
\mathscr{M}_\vartheta(70):=\inf_{\substack{u\in X \\ \|u\|_{L^2(\R^3)}^2 = 70}} {\mathscr E}_\vartheta(u), \qquad
{\mathscr E}_\vartheta(u) := \frac{1}{2}\int_{\R^3} (1+2\vartheta u^2)|Du|^2dx- \frac{1}{3}\int_{\R^3} |u|^{3}dx.
\end{equation}
with $\vartheta=1$ (quasi-linear case) and $\vartheta=0$ (semi-linear case). Roughly speaking, the term 
$-u\Delta u^2$ produces an additional diffusive contribution which tends to squeeze 
the bump down against source effects.

\begin{figure}[!ht]
\includegraphics[scale=0.48]{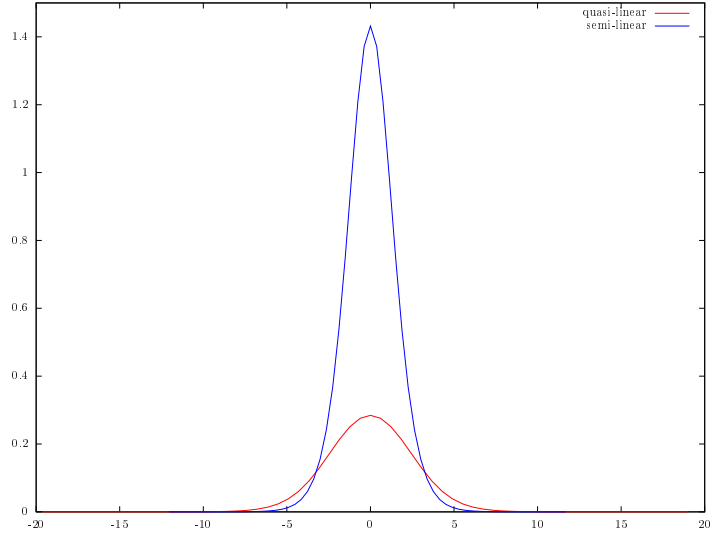}
\caption{Comparison between the solution (a section of the absolute
value squared) of~\eqref{alphamin} with $\vartheta=1$ (quasi-linear case)
and with $\vartheta=0$ (semi-linear case). The additional diffusive term present in the quasi-linear
case tends to produce squeezing effects.}
\label{fig:confronto}
\end{figure}

\section{Results}
Concerning Problem~\ref{bounds}, we have the following (see also Fig.~\ref{fig:bounds})

\begin{proposition}
	\label{variousnew}
	The following properties hold.
\begin{enumerate}
\item For every $\frac{7}{3} \leq p <\frac{13}{3}$ there holds
$$
\forall c>0:\quad
\mathscr{M}(c)\leq \inf_{\sigma> 0}\Big( \sigma^2\frac{3c}{4}+
\sigma^5\frac{3c^2}{8\sqrt{2}\pi^{3/2}}-
\sigma^{\frac{3(p-1)}{2}}\frac{2\sqrt{2}c^{\frac{p+1}{2}}\pi^{\frac{3}{2}-\frac{3(p+1)}{4}}}{(p+1)^\frac{5}{2}} \Big).
$$
\vskip4pt
\item For every $\frac{7}{3} \leq p <\frac{13}{3}$, if 
$$
{\mathcal K}_p:=4^{\frac{3p-3}{10}} S_{{\rm sob}}^{\frac{3p-3}{5}},\qquad S_{{\rm sob}}=\frac{2^{1/3}}{\sqrt{3\pi}}\frac{1}{\Gamma^{1/3}(3/2)},
$$
being $S_{{\rm sob}}$ the best Sobolev constant for the embedding of $H^1(\R^3)$ into $L^q(\R^3)$, there holds
$$
\forall c>0:\quad
\mathscr{M}(c)\geq -\frac{13-3p}{3(p-1)}\Big(\frac{10(p+1)}{3(p-1){\mathcal K}_p}\Big)^{\frac{10}{3p-13}} c^{\frac{11-p}{13-3p}}.
$$
\vskip4pt
\item
For $p=\frac{7}{3}$, setting $A:=\frac{2\sqrt{2}3^{5/2}}{10^{5/2}\pi}$,
$B:=\frac{3^{2/3}}{2^{7/3}\pi}$ and $c_\sharp:=(\frac{10^{5/2}\pi}{2^{7/2}3^{3/2}})^{3/2}$, there holds
$$
\forall c>0:\quad
\mathscr{M}(c)|_{p=\frac{7}{3}}\leq
\begin{cases}
	\,\,\, 0 & \text{if $c\leq c_\sharp$,}  \\
	-\big[\big(\frac{2}{5}\big)^{2/3}-\big(\frac{2}{5}\big)^{5/3}\big]\frac{[A c^{5/3}-3c/4]^{5/3}}{B c^{4/3}} & \text{if $c\geq c_\sharp$.}
\end{cases}
$$
In particular, $\mathscr{M}(c)|_{p=\frac{7}{3}}\apprle -Mc^{13/9},$ for every $c>0$ large and some $M>0$.
\item If $p=\frac{13}{3}$, setting 
$$
c_\flat:=(16/{3{\mathcal K}_{\frac{13}{3}}})^{3/2}\approx 19.73,\qquad {\mathcal K}_{\frac{13}{3}}:=4 S_{{\rm sob}}^2,
$$
we have $\mathscr{M}(c) =0$ for every $c \leq c_\flat$ and the infimum $\mathscr{M}(c)$ is not attained.
\vskip4pt
\item If $p=\frac{13}{3}$, setting 
$$
c^\flat:=\frac{3^{3/2}(16/3)^{15/4}\pi^{3/2}}{32^{3/2}}\approx 85.09,
$$ 
we have $\mathscr{M}(c) =-\infty$ for every $c>c^\flat$.
\end{enumerate}
\end{proposition}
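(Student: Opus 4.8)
The plan is to produce the blow-up directly, by driving the energy to $-\infty$ along an $L^2$-norm-preserving rescaling of a single fixed Gaussian, exploiting the fact that at the critical exponent $p=\frac{13}{3}$ the quasi-linear term $\int_{\R^3}u^2|Du|^2$ and the nonlinear term $\int_{\R^3}|u|^{p+1}$ obey the \emph{same} scaling law. First I would reuse the Gaussian competitor $u$ from item (1), normalized so that $\|u\|_{L^2(\R^3)}^2=c$, for which the three integrals $\int_{\R^3}|Du|^2$, $\int_{\R^3}u^2|Du|^2$ and $\int_{\R^3}|u|^{p+1}$ are exactly the quantities already computed there. For $\sigma>0$ I introduce the dilation $u_\sigma(x):=\sigma^{3/2}u(\sigma x)$, which belongs to $X$ and satisfies $\|u_\sigma\|_{L^2(\R^3)}^2=\|u\|_{L^2(\R^3)}^2=c$, so every $u_\sigma$ is admissible for $\mathscr{M}(c)$. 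A change of variables then gives $\int_{\R^3}|Du_\sigma|^2=\sigma^2\int_{\R^3}|Du|^2$, $\int_{\R^3}u_\sigma^2|Du_\sigma|^2=\sigma^5\int_{\R^3}u^2|Du|^2$ and $\int_{\R^3}|u_\sigma|^{p+1}=\sigma^{3(p-1)/2}\int_{\R^3}|u|^{p+1}$.

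The crucial observation is that for $p=\frac{13}{3}$ one has $\frac{3(p-1)}{2}=5$, so the two latter exponents coincide and the energy collapses onto a two-term polynomial in $\sigma$. Inserting the Gaussian values from item (1) (with $p+1=\frac{16}{3}$, hence $\frac{p+1}{2}=\frac83$ and $\frac32-\frac{3(p+1)}{4}=-\frac52$) gives
$$
{\mathscr E}(u_\sigma)=\frac{3c}{4}\,\sigma^2+\Big(\frac{3c^2}{8\sqrt2\,\pi^{3/2}}-\frac{2\sqrt2\,c^{8/3}\pi^{-5/2}}{(16/3)^{5/2}}\Big)\sigma^5.
$$
Since the bracketed coefficient of $\sigma^5$ governs the behaviour as $\sigma\to+\infty$ (the positive $\sigma^2$ term being of lower order), it suffices to decide its sign. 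Dividing the inequality ``coefficient $<0$'' by $c^2$ and isolating $c$, one finds that the coefficient is strictly negative precisely when $c^{2/3}>\frac{3(16/3)^{5/2}\pi}{32}$, that is, when $c>c^\flat=\big(\frac{3(16/3)^{5/2}\pi}{32}\big)^{3/2}$, which is exactly the stated value.

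Consequently, for every $c>c^\flat$ the admissible family $\{u_\sigma\}_{\sigma>0}$ satisfies ${\mathscr E}(u_\sigma)\to-\infty$ as $\sigma\to+\infty$, and therefore $\mathscr{M}(c)=-\infty$. I do not expect any genuine analytic obstacle here, because the divergence is realized by an explicit one-parameter family rather than through an existence or compactness argument; the only points requiring care are the bookkeeping that $\frac{3(p-1)}{2}$ equals exactly $5$ at $p=\frac{13}{3}$ (so that no intermediate power of $\sigma$ with a sign-indefinite coefficient remains as the leading term), and the elementary but constant-sensitive algebra that reconciles the two competing $\sigma^5$-coefficients into the precise threshold $c^\flat$.
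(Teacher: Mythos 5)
Your treatment of item (5) is correct, and it is in substance identical to the paper's: the dilated family $u_\sigma(x)=\sigma^{3/2}u(\sigma x)$ built from the normalized Gaussian is exactly the family $g_{c,\sigma}$ of Section~\ref{numsec}, so your two-term expression for ${\mathscr E}(u_\sigma)$ is precisely the paper's formula \eqref{eq:energygaussian} specialized to $p=\frac{13}{3}$, where $\frac{3(p-1)}{2}=5$, and your sign analysis of the $\sigma^5$ coefficient reproduces the threshold $c^\flat=\bigl(\tfrac{3(16/3)^{5/2}\pi}{32}\bigr)^{3/2}$ exactly. The paper disposes of (5) with the remark that it follows ``by the arguments in Section~\ref{numsec} and direct computations''; you have simply written those computations out, correctly.

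The genuine gap is one of coverage: the statement is a five-part proposition, and your proposal proves only item (5), while explicitly assuming item (1) as input. Items (1) and (3) are, like (5), consequences of the Gaussian energy formula -- (1) is immediate from it, and (3) requires optimizing it in $\sigma$ at $p=\frac{7}{3}$, where now $\frac{3(p-1)}{2}=2$ and it is the $\sigma^2$ coefficient, namely $\frac{3c}{4}-Ac^{5/3}$, that changes sign -- so your method extends to them with more bookkeeping. But items (2) and (4) are \emph{lower} bounds on $\mathscr{M}(c)$, and no test-function construction can ever bound an infimum from below; they need an argument of the opposite kind, which is absent from your proposal. The paper obtains them from the H\"older--Sobolev interpolation \eqref{1.1} (proved via $\int_{\R^3}|u|^{12}dx=\int_{\R^3}(u^2)^{2^*}dx$ and $\int_{\R^3}|D(u^2)|^2dx=4\int_{\R^3}u^2|Du|^2dx$, i.e.\ the critical embedding applied to $u^2$), which gives ${\mathscr E}(u)\geq \omega(s)$ with $s=\int_{\R^3}u^2|Du|^2dx$ and $\omega(s)=s-Ks^{(3p-3)/10}$ for an explicit constant $K=K(p,c)$. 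For $p<\frac{13}{3}$ the exponent $\frac{3p-3}{10}$ is strictly less than $1$, so $\omega$ has a finite negative minimum, yielding (2); for $p=\frac{13}{3}$ the exponent equals $1$, as in \eqref{sobcontroll}, so ${\mathscr E}(u)\geq\bigl(1-\tfrac{3{\mathcal K}_{13/3}}{16}c^{2/3}\bigr)s\geq 0$ once $c\leq c_\flat$, yielding (4). You should either restrict your claim to item (5) or supply this second, interpolation-based half of the proof.
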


\begin{proof}
Properties (1), (3) and (5) easily follow by the arguments in Section~\ref{numsec} and direct computations.
Properties (2) and (4) need bounds from below and can be justified as follows. The best Sobolev constant $S_{{\rm sob}}$ is computed
through the formula contained in \cite{talenti}. Concerning (4),
by H\"{o}lder and Sobolev inequalities, for $u \in X$ we have
\begin{equation}
	\label{sobcontroll}
   \int_{\R^3}|u|^{16/3} dx  \leq   \Big( \int_{\R^3} |u|^2 dx\Big)^{2/3}
   \Big( \int_{\R^3}|u|^{12}dx\Big)^{1/3} 
    \leq {\mathcal K}_{\frac{13}{3}} c^{2/3}\Big( \int_{\R^3}|u|^2 |Du|^2 dx \Big),
\end{equation}
where we have used the fact that ($2^*=6$ is the critical Sobolev exponent in $\R^3$)
$$
\int_{\R^3} |u|^{12}dx = \int_{\R^3}(u^2)^{2^*}dx,\qquad 
\int_{\R^3} |D(u^2)|^2dx = 4\int_{\R^3} |u|^2 |Du|^2 dx.
$$
From inequality~\eqref{sobcontroll} we infer
$$
{\mathcal E}(u) \geq \int_{\R^3}|u|^2|Du|^2 dx -
\frac{3{\mathcal K}_{\frac{13}{3}}}{16}c^{2/3} \int_{\R^3}|u|^2 |D u|^2dx,
$$
which yields ${\mathcal E}(u)\geq 0$ for every $u\in X$ and any $c\leq c_\flat$ and 
hence, in turn, the desired conclusion. In a similar fashion, concerning (2), if $p<13/3$,
using H\"{o}lder and Sobolev inequalities for any $u \in X$ we have
\begin{align}
   \label{1.1}
   \int_{\R^3}|u|^{p+1} dx  \leq  {\mathcal K}_p c^{\frac{11-p}{10}}
   \Big( \int_{\R^3}|u|^2 |\nabla u|^2 dx \Big)^{\frac{3p-3}{10}},
\end{align}
yielding immediately that
$$
{\mathcal E}(u) \geq \int_{\R^3}|u|^2|\nabla u|^2 dx -\frac{{\mathcal K}_p}{p+1} c^{\frac{11-p}{10}} \Big( \int_{\R^3}|u|^2 |\nabla u|^2
dx \Big)^{\frac{3p-3}{10}}.
$$
Since $p < 3/13$ it follows that $\frac{3p-3}{10}<1$. In turn, the function $\omega:[0,+\infty)\to\R$
$$
\omega(s)=s-\frac{{\mathcal K}_p}{p+1} c^{\frac{11-p}{10}} s^{\frac{3p-3}{10}},
\qquad s\geq 0,
$$
always admits a (negative) absolute minimum point at a point $t_{p,c}>0$ which can be easily computed,  
yielding the desired assertion by the arbitrariness of $u\in X$.
\end{proof}

\begin{figure}
\includegraphics[bb=0 359 577 793,scale=0.48]{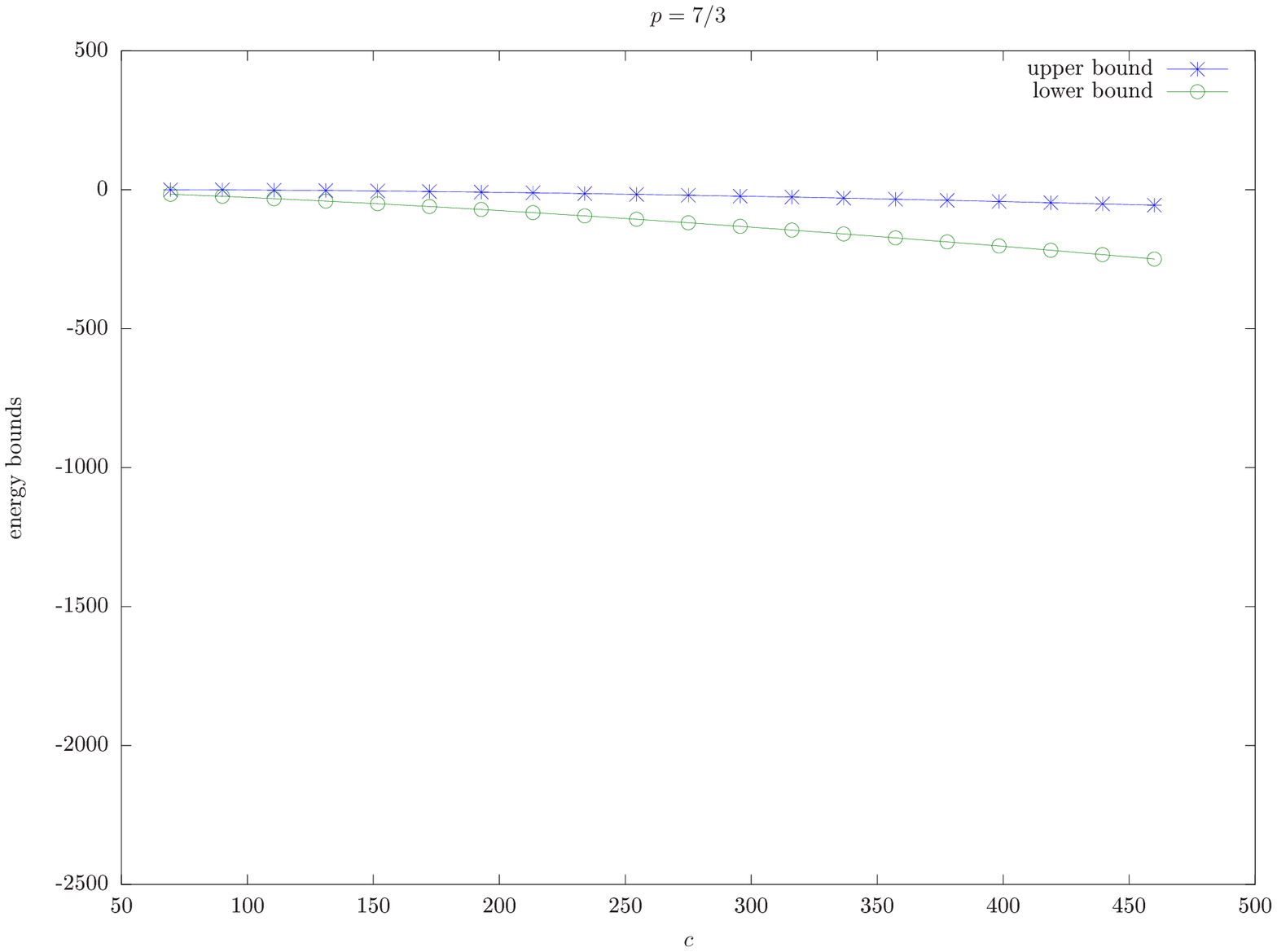}\\
\includegraphics[bb=0 359 577 793,scale=0.48]{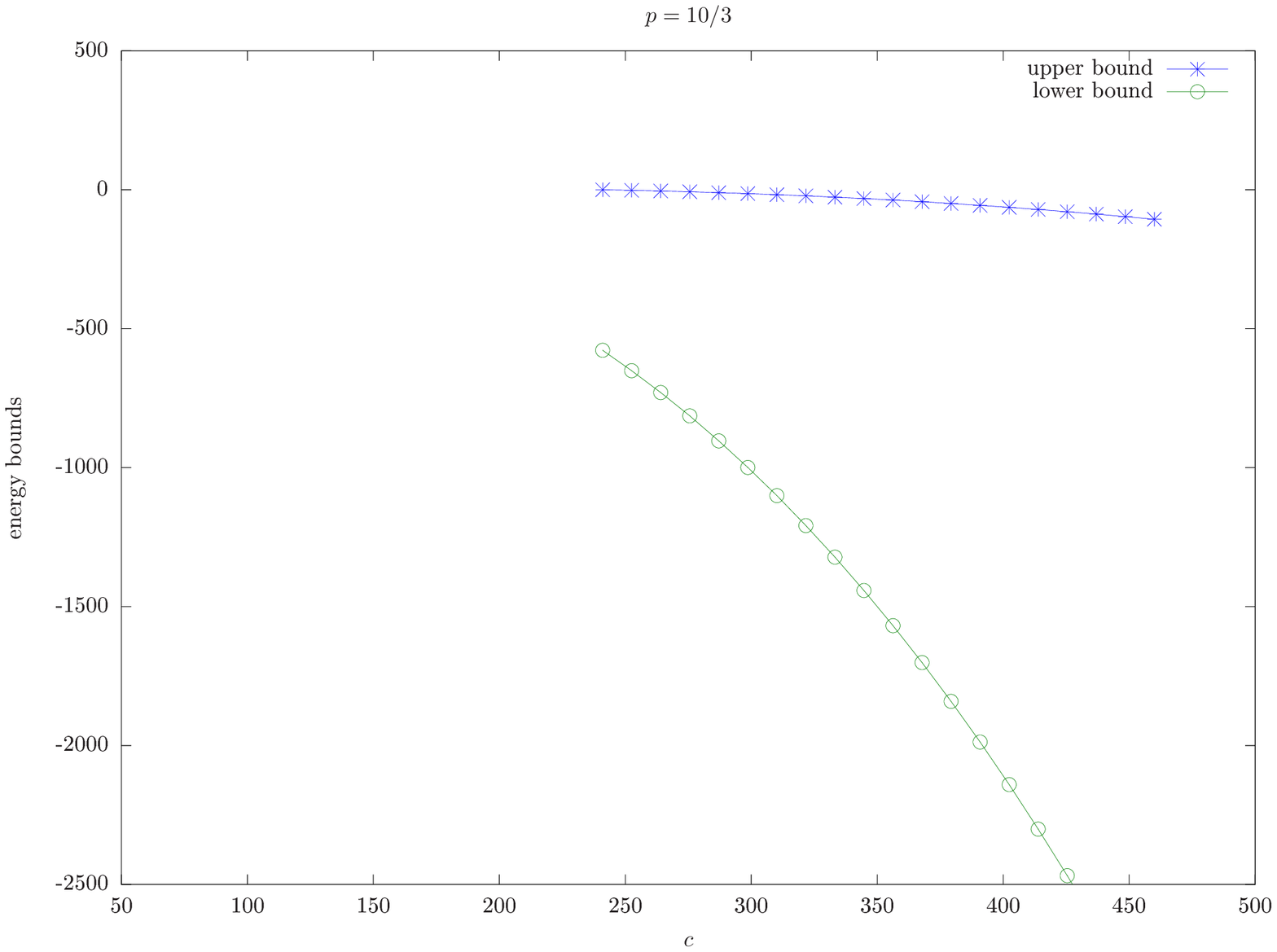}
\caption{Upper and lower bounds of ${\mathscr M}(c)$ according to the estimates obtained
Proposition~\ref{variousnew} in the particular cases $p=7/3$ (sharp bounds) and $p=10/3$ (less sharp bounds).}
\label{fig:bounds}
\end{figure}

\noindent
Concerning Problem~\ref{bifproblem} we shall provide an upper bounding profile for the values of $c_\sharp(p)$
and give indications showing that very likely $c_\sharp(p)$ remains in a small lower neighborhood of this
profile. As $p$ increases from $7/3$ up to a certain value $p_0\sim 3.3$ the bounding profile is increasing,
reaching values around $c=250$. Then, after $p_0$ it decreases. 
\vskip2.5pt
\noindent
Concerning Problem~\ref{gsproblem}, under the conjectured uniqueness result we compute the ground state solutions
for some values of $c$ greater than the upper bounding profile for the values of $c_\sharp(p)$. In the case $c<c_\sharp(p)$,
roughly speaking, if $(u_n)\subset X$ is an arbitrary minimizing sequence
for problem $\mathscr{M}(c)$, since we know that $\mathscr{M}(c)$ is not attained, $(u_n)$
cannot be strongly convergent, up to a subsequence, in $L^2(\R^3)$ and in turn in $H^1(\R^3)$. 
Then, by virtue of Lions's compactness-concentration principle, 
only {\em vanishing} or {\em dichotomy} might occur, in the language of \cite{pll-1,pll-2}. On 
the other hand, it was proved in \cite[Theorem 1.11]{CJS} that dichotomy can always be ruled out. In conclusion,
the only possibility remaining for a minimizing sequence is {\em vanishing}, precisely:
$$
\mbox{for every $R >0$:}
\qquad
\lim_{n\to\infty}\sup_{y\in\R^3}\int_{B(y,R)} u_n^2 dx=0,
$$
where $B(y,R)$ denotes the ball in $\R^3$ of center $y$ and radius $R$.
In particular, fixed any {\em bounded} domain $C$ in $\R^3$, imagined for instance as the computational domain,
the sequence $(u_n)$ {\rm cannot} be essentially supported into $C$, being for $R_0={\rm diam}(C)$
$$
\int_{\R^3\setminus C}u_n^2 dx\geq c-\sup_{y\in C}\int_{B(y,R_0)}u_n^2 dx\geq c-\sup_{y\in \R^3}\int_{B(y,R_0)}u_n^2 dx\geq c/2,
$$
for $n$ sufficiently large. This means that $(u_n)$ (in particular any numerically approximating ${\mathscr M}(c)$) 
tends to spread out of {\em any} fixed (computational) domain $C$.
For instance, the sequence $(g_{c,\sigma_j})$ with $\sigma_j\to 0$ as $j\to\infty$, where $g_{c,\sigma}$ is defined in Section~\ref{numsec}
is such that $\|g_{c,\sigma_j}\|_{L^2(\R^3)}^2=c$ and ${\mathcal E}(g_{c,\sigma_j})\to 0$ as $j\to\infty$ and hence, for $c<c_\sharp(p)$,
being ${\mathscr M}(c)=0$, it is a minimization sequence. It shows vanishes, since 
$$
\sup_{y\in\R^3}\int_{B_R(y)} g_{c,\sigma_j}^2dx\leq CR^3\sigma_j^3,
$$ 
for all $R >0$ and $j\geq 1$.

\section{Numerical approximation}\label{numsec}
Instead of a direct minimization of the energy functional~\eqref{eq:energyfunc}
(see, for instance, \cite{CORT09}),
as seen in \cite{BD04,CS}, 
it is also possible to find a solution of~\eqref{eq:stationary}
by solving the parabolic problem
\begin{equation}\label{eq:parabolic}
\left\{
\begin{aligned}
\partial_t u&=\Delta u+u\Delta u^2+|u|^{p-1}u-\lambda(u)u\\
u(0)&=u_0,\quad \|u_0\|^2_{L^2(\R^3)}=c
\end{aligned}\right.
\end{equation}
up to the steady-state $u_c$, where $\lambda(u)$ is defined by
\begin{equation*}
\lambda(u)=-\frac{\int_{\R^3}(1+4u^2)|Du|^2-\int_{\R^3}|u|^{p+1}}{\int_{\R^3} u^2}
\end{equation*}
This approach in known as continuous normalized gradient flow.
It is easy to show that the energy associated to the solution $u(t)$ 
decreases in time, whereas the $L^2$-norm is constant.

\begin{figure}
\includegraphics[bb=0 359 577 793,scale=0.48]{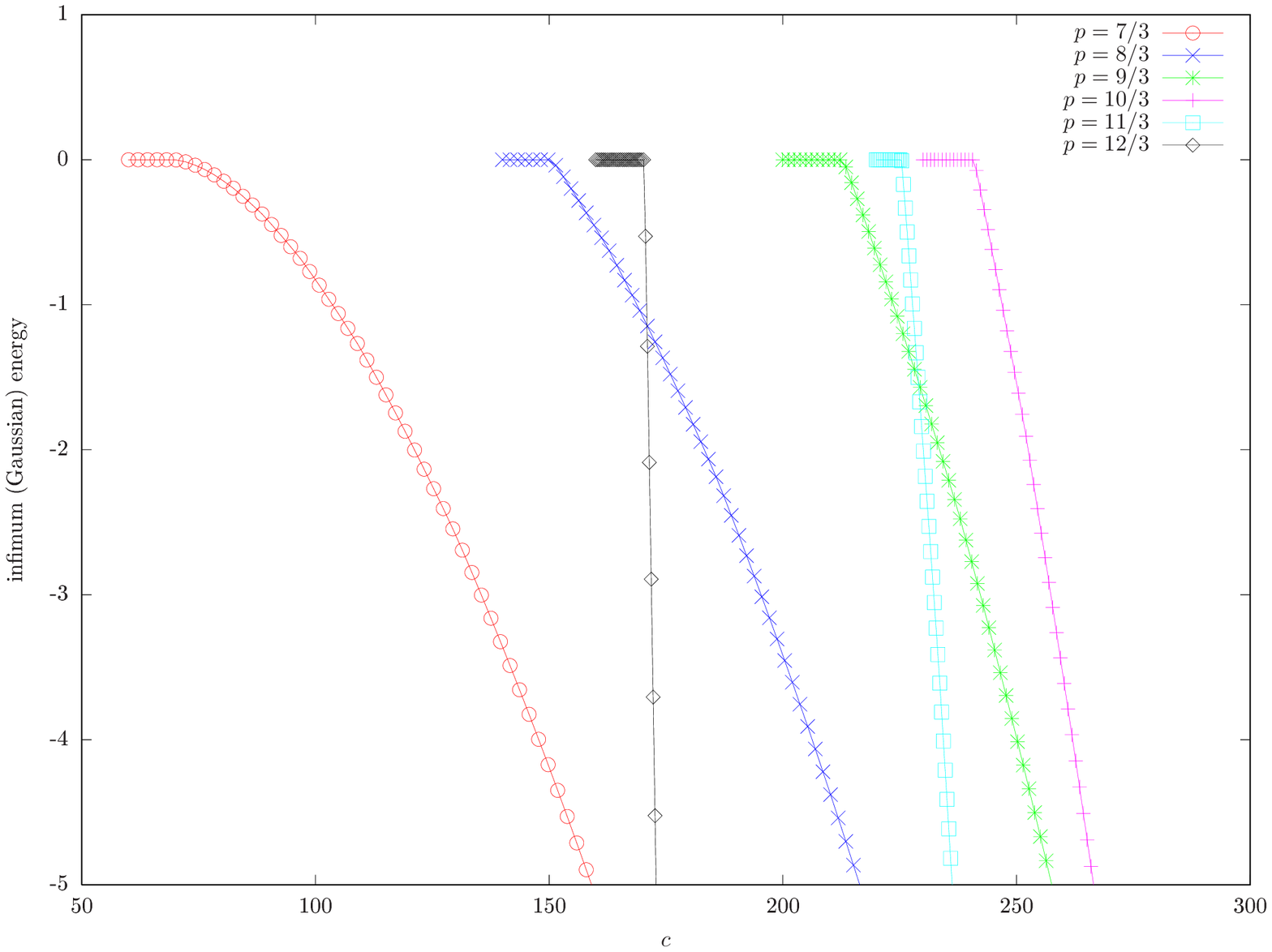}\\
\includegraphics[bb=0 359 577 793,scale=0.48]{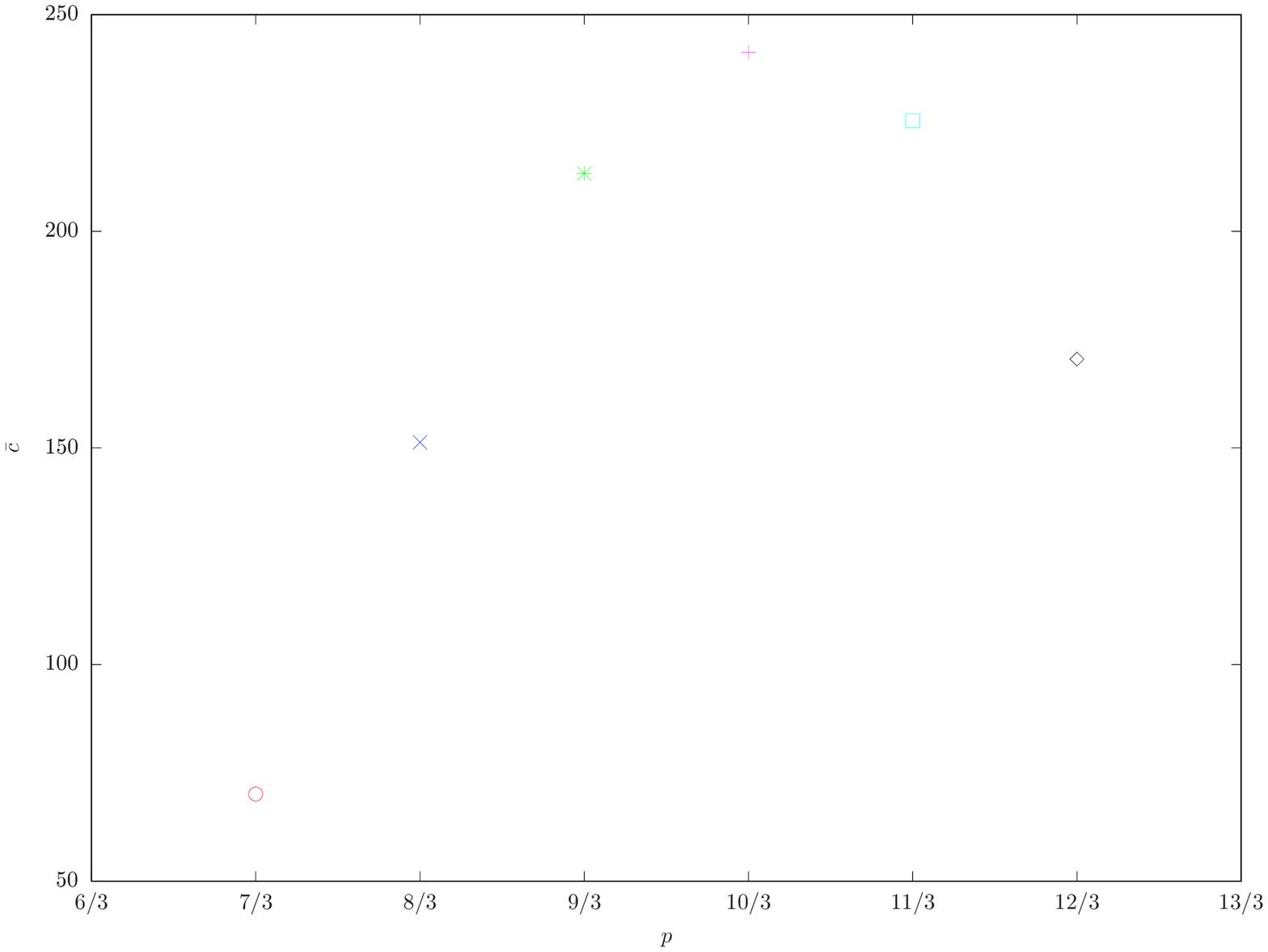}
\caption{Values of the infimum (Gaussian) energy with respect to $c$
(left) and values of $c_\mathrm{g}$ (right): from $p=7/3$ to $p=12/3$ they
are $69.510$, $150.65$, $212.91$, $241.01$, $225.42$, $170.33$, respectively.}
\label{fig:energy_gaussian}
\end{figure}

In order to find a ``good'' initial solution $u_0$, that is a function
with a negative energy, 
we can consider the family of Gaussian radial functions
\begin{equation*}
g_{c,\sigma}(r)=\frac{\sqrt{c\sigma^3}}{\sqrt[4]{\pi^3}}e^{-\sigma^2 r^2/2},\quad
\|g_{c,\sigma}\|^2_{L^2(\R^3)}=c,\quad \sigma>0
\end{equation*}
and minimize the energy $\mathscr {E}(g_{c,\sigma})$, which, for a 
given $p$ and $c$, can be computed 
analytically as 
\begin{equation}\label{eq:energygaussian}
\mathscr {E}(g_{c,\sigma})=\sigma^2\frac{3c}{4}+
\sigma^5\frac{3c^2}{8\sqrt{2}\pi^{3/2}}-
\sigma^{\frac{3(p-1)}{2}}\frac{2\sqrt{2}c^{\frac{p+1}{2}}\pi^{\frac{3}{2}-\frac{3(p+1)}{4}}}{(p+1)^\frac{5}{2}}
\end{equation}
with respect to the parameter $\sigma$. If the infimum value for the energy 
is zero,
we increase the value of $c$ and look again for the infimum energy. We proceed
with increasing values of $c$ until we find a $c_\mathrm{g}$ such that the
minimum energy with respect to $\sigma$, corresponding
to a value $\bar \sigma$, is negative. This is possible, since we
consider a discrete sequence of increasing values for $c$ in order
to test the negativity of the energy.
Such a 
$c_\mathrm{g}$ is clearly an upper bound for the desired value $c_\sharp$. For
the range $p=7/3,8/3,9/3,10/3,11/3,12/3$ we obtain the values reported
in Figure~\ref{fig:energy_gaussian} (right).

Now, we are ready to look for values $c<c_\mathrm{g}$, 
for which the steady-state
of~\eqref{eq:parabolic} has a negative energy. To this purpose,
we choose $u_0=g_{c,\sigma}$, where $\sigma = \bar \sigma \cdot c/c_\mathrm{g}$. 
The meaning of this choice is
the following: since $c<c_\mathrm{g}$, the infimum value of the
energy attained by a Gaussian
function is zero and corresponds to the limit case $\sigma\to0$. 
We instead select 
$0<\sigma< \bar \sigma$ with the idea that $g_{c,\sigma}$ is a good 
initial value, because close to
$g_{c_\mathrm{g},\bar \sigma}$, that is the optimal element in the Gaussian family. 
With this choice, clearly we have $\mathscr{E}(u_0)>0$.

In order to fix once and for all the computational domain in such a way that
it does not depend on $\sigma$, 
we scale the space variables by $\sigma$ and
the unknown $u$ in order to have unitary $L^2$ norm, that is
\begin{equation*}
\sqrt{c\sigma^3}v_c(t,\sigma \cdot)=u(t,\cdot)
\end{equation*}
We end up with
\begin{equation}\label{eq:parabolicscaled}
\left\{
\begin{aligned}
\partial_t v_c&=\sigma^2\Delta v_c+c\sigma^5v_c\Delta v_c^2+
(c\sigma^3)^{\frac{p-1}{2}}|v_c|^{p-1}v_c+\eta(v_c)v_c\\
v_c(0)&=\frac{1}{\sqrt[4]{\pi^3}}e^{-r^2/2}
\end{aligned}\right.
\end{equation}
where
\begin{equation*}
\eta(v_c)=\frac{\int_{\R^3}\sigma^2(1+4c\sigma^3v_c^2)|Dv_c|^2-
(c\sigma^3)^{\frac{p-1}{2}}\int_{\R^3}|v_c|^{p+1}}{\int_{\R^3} v_c^2}
\end{equation*}
The corresponding energy is
\begin{equation*}
\mathscr{E}(u)=E(v_c)=\frac{1}{2}
\int_{\R^3}c \sigma^2(1+2c\sigma^3v_c^2)|Dv_c|^2-
\frac{c(c\sigma^3)^{\frac{p-1}{2}}}{p+1}\int_{\R^3}|v_c|^{p+1}.
\end{equation*}
We solve equation~\eqref{eq:parabolicscaled} up to a final time $T$ for which
\begin{equation*}
\|\bar \sigma^2\Delta v_c+c\bar \sigma^5v_c\Delta v_c^2+
(c\bar \sigma^3)^{\frac{p-1}{2}}|v_c|^{p-1}v_c+\eta(v_c)v_c\|_{L^2(\R^3)}<
\mathrm{tol}
\end{equation*}
where $\mathrm{tol}$ is a prescribed tolerance to detect the approximated
steady-state. 
As already done in~\cite{CS}, we apply the exponential Runge--Kutta method
of order two (see~\cite{HO10}) to the spectral Fourier decomposition in space 
of~\eqref{eq:parabolicscaled}. The embedded exponential Euler method gives
the possibility to derive a variable stepsize integrator, which is particularly
useful when approaching the steady-state solution, allowing the time steps
to become larger and larger. For our numerical experiments, we used 
the computational domain $[-5,5]^3$, the regular grid of $64^3$ points, a
tolerance for the local error (in the $L^2$ norm) 
equal to $10^{-8}$ and the steady-state detection tolerance 
$\mathrm{tol}=10^{-7}$.

The solution $v_c(T)$ is
then considered an approximated steady-state solution. If its energy is negative
and it is radially symmetric and decreasing, then we conclude it is a minimum
for $\mathscr{M}(c)$. Therefore, $c\ge c_\sharp$ is the current upper 
bound for $c_\sharp$ (blue circle in Figure~\ref{fig:stati}). 

On the other hand, if $c<c_\sharp$, then $\mathscr{M}(c)=0$ and the
infimum is approximated by flatter and flatter functions. This is
perfectly clear in the case one restricts the search among Gaussian
functions, for which $\mathscr{E}(g_{c,\sigma})\to0$ for $\sigma\to0$.
This situation can be recognized in the numerical experiments because
the essential support of $v_c(t)$ during time evolution tends to grow and
to spread out the computational domain (green plus in Figure~\ref{fig:stati}), 
which was chosen in such a way
to comfortably contain the essential support of the initial solution
$v_c(0)$. In this case,
we apply a bisection algorithm on the values $c$ and $c_\mathrm{g}$ 
(the Gaussian upper bound for $c_\sharp$) in order to
find a tighter upper bound for $c_\sharp$, since $c<c_\sharp$ and 
$c_\mathrm{g}\ge c_\sharp$.

\begin{figure}
\includegraphics[scale=0.48]{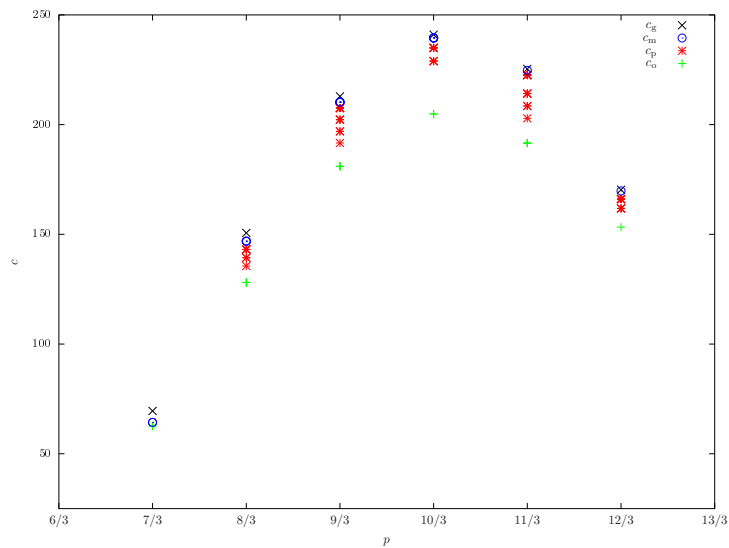}
\caption{Values of $c_\mathrm{g}$ (Gaussian upper bound of $c_\sharp$), 
$c_\mathrm{m}$ (values
for which $E(v_{c_\mathrm{m}}(T))<0$) and $c_\mathrm{p}$ (values for which 
$E(v_{c_\mathrm{p}}(T))>0$). For instance, for the case $p=9/3$ 
and $c=0.9875\cdot c_\mathrm{g} \approx 210.25$ 
we found an approximated steady-state 
with energy $E(v_c(T))\approx -1.30\cdot 10^{-1}$ (blue circle), whereas
with $c=0.975\cdot c_\mathrm{g}\approx 207.59$, 
we found an approximated steady-state with energy 
$E(v_c(T))\approx 6.97\cdot 10^{-2}$ (red star).
}
\label{fig:stati}
\end{figure}

In the numerical experiments we encountered another situation: starting
from an initial solution with positive energy, it was possible to find a
radially symmetric and decreasing approximated steady-state solution, 
whose support
was perfectly contained into the computational domain and with a 
positive
energy (red star in Figure~\ref{fig:stati}). 
This solution is not a solution with minimum energy, because
for $\sigma$ small enough it is possible to find a Gaussian function
$g_{c,\sigma}$ with smaller energy. 
In our numerical experiments, we
observed this behaviour, for a given $p$ and with the tolerances
described above, 
for the values of $c$ between the first value for which the essential 
support of the
solution spread out the computational domain and the current upper
bound for $c_\sharp$.
We notice that we were not able to obtain
solutions with positive energy in the limit case $p=7/3$.

Overall, from the numerical experiments we found out that the higher is 
$p$ the more difficult is to find a value of $c<c_\mathrm{g}$ for
which the steady-state has a negative energy and can thus be considered a 
solution of minimum energy.

\bigskip

\bigskip
\bigskip

\end{document}